\newcommand{\R}{{\mathbb R}}
\newcommand{\cha}{\mathds{1}}
\newtheorem{lemma}{Lemma}[section]
\newtheorem{definition}[lemma]{Definition}
\newtheorem{theorem}[lemma]{Theorem}
\newtheorem{corollary}[lemma]{Corollary}
\newtheorem{example}{Example}
\newtheorem{thm}{Theorem}
\newtheorem{problem}{Problem}
\numberwithin{equation}{section}
\DeclareFixedFont{\Acknowledgment}{OT1}{cmr}{bx}{n}{14pt}
\begin{document}

\title{\bf A note on limit of first eigenfunctions of $p$-Laplacian on graphs}
\date{}

\author{Huabin Ge}
\address{Huabin Ge, Department of Mathematics, Beijing Jiaotong University, Beijing, 100044, China}
\email{hbge@bjtu.edu.cn}
\thanks{H. Ge is supported by NSFC (China) under grant no. 11871094.}

\author{Bobo Hua}
\address{Bobo Hua, School of Mathematical Sciences, LMNS, Fudan University, Shanghai 200433, China}
\email{bobohua@fudan.edu.cn}
\thanks{B. Hua is supported by NSFC (China) under grant no. 11831004 and grant no. 11826031. }

\author{Wenfeng Jiang}
\address{Wenfeng Jiang, School of Mathematics (Zhuhai), Sun Yat-Sen University, Zhuhai, China}
\email{wen\_feng1912@outlook.com}
\thanks{W. Jiang is supported by Chinese Universities Scientific Fund(74120-31610002)}

\maketitle

\begin{abstract}
We study the limit of first eigenfunctions of (discrete) $p$-Laplacian on a finite subset of a graph with Dirichlet boundary condition, as $p\to 1.$ We prove that up to a subsequence,  they converge to a summation of characteristic functions of Cheeger cuts of the graph. We give an example to show that the limit may not be a characteristic function of a single Cheeger cut.
\end{abstract}

\setcounter{section}{-1}
\section{Introduction}

The spectral theory of linear Laplacian, called Laplace-Beltrami operator, on a domain of an Euclidean space or a Riemannian manifold is extensively studied in the literature, see e.g. \cite{CH53}. The $p$-Laplacians are nonlinear generalizations of the linear Laplacian, which corresponds to the case $p=2$. These are nonlinear elliptic operators which possess many analogous properties as the linear Laplacian.

A graph consists of a set of vertices and a set of edges. The Laplacian on a finite graph is a finite dimensional linear operator, see e.g. \cite{Chung10}, which emerges from the discretization of the Laplace-Beltrami operator of a manifold, the Cayley graph of a discrete group, data sciences and many others. Compared to continuous Laplacians, one advantage of discrete Laplacians is that one can calculate the eigenvalues of the Laplacian on a finite graph which are in fact eigenvalues of a finite matrix.


Cheeger \cite{Cheeger70} defined an isoperimetric constant, now called Cheeger constant, on a compact manifold and used it to estimate the first non-trivial eigenvalue of the Laplacian, see also \cite{K}. These were generalized to graphs by Alon-Milman \cite{Alon-Milman} and Dodziuk \cite{Dodziuk84} respectively. These estimates, called Cheeger estimates, are very useful in the spectral theory. The spectral theory for discrete $p$-Laplacians was studied by \cite{Yamasaki79,Amghibech03,Takeuchi03,Hein-Buhler10,Keller-Mugnolo16}. It turns out that this theory unifies these constants involved in the Cheeger estimate. The Cheeger constant of a finite graph is in fact the first non-trivial eigenvalue of $1$-Laplacian, see Chang, Hein et al. \cite{Chang16,Chang-Shao-Zhang17,Hein-Buhler10}. So that the Cheeger estimate can be regarded as the eigenvalue relation for $p$-Laplacians, $p=1$ and $p=2.$

In this paper, we study first eigenvalues and eigenfunctions of the $p$-Laplacian on a subgraph of a graph with Dirichlet boundary condition. Let $G= (V, E)$ be a simple, undirected, locally finite graph, where $V$ is the vertex set and $E$ is the edge set. Two vertices $x,y$ are called neighbors, denoted by $x\sim y$, if $\{x,y\}\in E.$ Let $$\mu:V \to (0, \infty), x\mapsto \mu_x,$$ be the vertex measure on $V$ and $$w: E \to (0, \infty), \{x,y\}\mapsto w_{xy}=w_{yx},$$ be the edge measure on $E.$ The quadruple $(V,E,\mu,w)$ is called a weighted graph. For a subset of $V$ (a subset of $E,$ resp.) we denote by $|\cdot|_\mu$ ($|\cdot|_w,$ resp.) the $\mu$-measure ($w$-measure, resp.) of the set. Let $\Omega$ be a finite subset of $V.$ We denote by $C_0(\Omega)$ the set of function on $V$ which vanishes on $V\setminus \Omega.$ 

For $p>1,$ we define the $p$-Laplacian with Dirichlet boundary condition, called Dirichlet $p$-Laplacian, on $\Omega$ as 
$$\Delta_p u(x)=\frac{1}{\mu_x}\sum_{y\in V: y\sim x}w_{xy}|u(y)-u(x)|^{p-2}(u(y)-u(x)),\quad x\in \Omega, u\in C_0(\Omega).$$ We call the pair $(\lambda, u)\in \R\times C_0(\Omega)$ satisfying 
\begin{equation}\label{defi:eigenequation}-\Delta_p u(x)=\lambda |u(x)|^{p-2}u(x),\quad \forall x\in \Omega\end{equation} the eigenvalue and the eigenfunction of Dirichlet $p$-Laplaican.
The smallest eigenvalue of Dirichlet $p$-Laplacian is called the first eigenvalue, denoted by $\lambda_{1,p}(\Omega)$, and the associated eigenfunction is called the first eigenfunction, denoted by $u_p.$

For $p\geq 1,$ the $p$-Dirichlet energy is defined as
$$E_p(u):=\sum_{x,y\in V, y\sim x}w_{xy}|u(y)-u(x)|^p,\quad u\in C_0(\Omega),$$ and the modified $p$-Dirichlet energy is defined as $$\widetilde{E_p}(u):=\frac{E_p(u)}{|u|^p},\quad \forall u\in C_0(\Omega)\setminus \{0\}.$$
We consider the variational problem \begin{equation}\label{defil:variation}\widetilde{E_p}:C_0(\Omega)\setminus\{0\}\to \R.\end{equation}
One is ready to see that for $p>1,$ critical points and critical values of the problem satisfy \eqref{defi:eigenequation}. The spectral theory for $1$-Laplacian was developed by Chang and Hein. Eigenvalues and eigenfunctions of Dirichlet $1$-Laplacian are defined as critical values and critical points of the variational problem \eqref{defil:variation} for $p=1.$ We denote by $\lambda_{1,1}(\Omega)$ the first eigenvalue for Dirichlet $1$-Laplacian on $\Omega.$ 
This yields the Rayleigh quotient characterization, $p\geq 1,$
\begin{equation}\label{eq:Rayleigh}\lambda_{1,p}(\Omega)=\inf_{u\in C_0(V), u\neq 0}\widetilde{E_p}(u).\end{equation}  First eigenfunctions of Dirichlet $p$-Laplacian have some nice properties, see \cite{BH}.
\begin{thm} For a finite connected subset $\Omega$ of $V$ and $p>1,$ a first eigenfunction $u$ on $\Omega$ has fixed sign, i.e. $u>0$ on $\Omega$ or $u<0$ on $\Omega.$ Moreover, the first eigenvalue is simple, i.e. for any two first eigenfunctions $u$ and $v$ on $\Omega,$ $u=c v$ for some $c\neq 0.$
\end{thm}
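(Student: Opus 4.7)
The plan proceeds in two stages: first establish that any first eigenfunction has constant sign via an absolute-value trick combined with a strong minimum principle; then upgrade this to simplicity through a discrete Picone-type inequality for pairs of positive functions.

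For the sign property, the elementary pointwise estimate $\bigl||a|-|b|\bigr|\le|a-b|$ gives $E_p(|u|)\le E_p(u)$, while the denominator of $\widetilde{E_p}$ is manifestly invariant under $u\mapsto|u|$. Thus $\widetilde{E_p}(|u|)\le\widetilde{E_p}(u)=\lambda_{1,p}(\Omega)$, and the Rayleigh characterisation \eqref{eq:Rayleigh} forces equality, so $|u|$ is itself a nonnegative first eigenfunction. Suppose $|u|(x_0)=0$ at some $x_0\in\Omega$. Evaluating the eigenvalue equation \eqref{defi:eigenequation} for $|u|$ at $x_0$ gives
$$0=\lambda_{1,p}(\Omega)\,|u(x_0)|^{p-1}=-\Delta_p|u|(x_0)=-\frac{1}{\mu_{x_0}}\sum_{y\sim x_0}w_{x_0 y}\,|u(y)|^{p-1},$$
so $|u(y)|=0$ for every neighbour $y$ of $x_0$, and in particular for every $y\in\Omega$ adjacent to $x_0$. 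Iterating along the connected subgraph $\Omega$ propagates this to $u\equiv 0$, contradicting $u$ being an eigenfunction. Hence $|u|>0$ throughout $\Omega$ and $u$ has constant sign.

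For simplicity, let $u,v$ be two first eigenfunctions; after adjusting signs both may be assumed strictly positive on $\Omega$. The central tool is a discrete Picone-type inequality: for positive $a,b$ and $p>1$, on each edge $\{x,y\}$,
$$|a(y)-a(x)|^{p-2}\bigl(a(y)-a(x)\bigr)\!\left(\frac{b(y)^p}{a(y)^{p-1}}-\frac{b(x)^p}{a(x)^{p-1}}\right)\le|b(y)-b(x)|^p,$$
with equality if and only if $b(x)/a(x)=b(y)/a(y)$. Set $\phi:=v^p/u^{p-1}$ on $\Omega$ and $\phi:=0$ elsewhere; because $u>0$ on $\Omega$ and $v\in C_0(\Omega)$, this is a well-defined element of $C_0(\Omega)$. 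Testing the eigenvalue equation for $u$ against $\phi$ and performing summation by parts reduces the identity to an edge-sum of the Picone left-hand side with $a=u,b=v$. On boundary edges ($x\in\Omega$, $y\notin\Omega$) a direct computation shows the summand equals $v(x)^p=|v(y)-v(x)|^p$, so Picone already holds with equality; on interior edges the general inequality applies. Since $v$ is itself a first eigenfunction, the total edge-sum of $|v(y)-v(x)|^p$ equals $\lambda_{1,p}(\Omega)\,|v|^p$, which matches the left-hand side of the tested identity, forcing Picone equality on every interior edge. The rigidity clause then gives $v/u$ constant along each interior edge, and connectedness of $\Omega$ delivers $v=cu$ for some $c>0$.

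The principal difficulty is proving the discrete Picone inequality itself, the analogue of the Allegretto--Huang identity in the continuous theory. After fixing an edge and normalising, it reduces to a scalar convexity inequality in two positive variables, which one handles via Young's inequality or directly via convexity of $t\mapsto t^p$ for $p>1$; its equality case is precisely the ratio condition that drives the rigidity. A secondary technical point is the consistent interpretation of $\phi=v^p/u^{p-1}$ near $\partial\Omega$; the direct check that Picone is an equality on boundary edges ensures no contribution is lost and lets the rigidity argument propagate cleanly from interior edges to all of $\Omega$.
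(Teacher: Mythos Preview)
The paper does not actually prove this theorem: it is stated as a quoted result with a pointer to \cite{BH} (``First eigenfunctions of Dirichlet $p$-Laplacian have some nice properties, see \cite{BH}''), and no argument is given in the present paper. So there is no ``paper's own proof'' to compare your proposal against.

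That said, your outline is the standard route and is correct. The absolute-value trick plus the eigen-equation at a putative zero of $|u|$ is exactly the strong minimum principle in this discrete setting, and connectedness of $\Omega$ (in the paper's sense: paths with all intermediate vertices in $\Omega$) lets the vanishing propagate along interior edges. For simplicity, the discrete Picone inequality with test function $\phi=v^p/u^{p-1}$ is the accepted argument; your observation that boundary edges automatically give equality is the right way to handle $\partial\Omega$, and the rigidity clause on interior edges together with the path-connectedness of $\Omega$ yields $v=cu$. The only cosmetic point to watch is the factor of $2$ coming from the paper's convention $E_p(u)=\sum_{x,y:\,y\sim x}w_{xy}|u(y)-u(x)|^p$ over ordered pairs, which you should track consistently when matching the summation-by-parts identity against $\lambda_{1,p}|v|_p^p$; it does not affect the logic.
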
 By this result, there is a unique first eigenfunction on $\Omega$ satisfies
$$u>0,\quad \mathrm{on}\ \Omega,\quad  |u|_p=1,$$ which is called the normalized first eigenfunction of $p$-Laplacian.

We introduce the definition of the Cheeger constant on $\Omega.$ For any subset $D\in V,$ the boundary of $D$ is defined as $\partial D:=\{\{x,y\}\in E: x\in D, y\in V\setminus D\}.$
 \begin{definition}
 The Cheeger constant on $\Omega$ is defined as
 $$
 h(\Omega)=\inf_{D\subset \Omega} \frac{|\partial D|_w}{|D|_\mu}.
 $$ A subset $D$ of $\Omega$ is called a Cheeger cut if
 $$
 \frac{|\partial D|_w}{|D|_\mu}=h(\Omega).
 $$
 \end{definition}

We prove the main result in the following.
\begin{theorem}\label{thm:main1} Let $\Omega$ be a finite connected subset of $V.$
For any sequence $\{p_i\}_{i=1}^\infty$ satisfying $p_i>1, p_i\to 1$, let $u_i$ be the corresponding normalized first eigenfunction of $p_i$-Laplacian. Then there is a subsequence $\{p_{i_k}\}$ such that $\{u_{i_k}\}_k$ converges
$$
\lim_{k \to \infty}u_{i_k}=\sum_{n=1}^N c_n\cha_{A_n} .
$$
where $A_n,$ $1\leq n\leq N$, are Cheeger cuts of $\Omega$ satisfying $A_{N}\subsetneq A_{N-1}\subsetneq \cdots \subsetneq A_1, $ $\cha_{A_n}$ are characteristic functions on $A_n$ and $c_n>0.$  \end{theorem}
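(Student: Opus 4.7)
The plan is to combine a finite-dimensional compactness argument with a coarea-type identity for graphs. Starting from the normalization $|u_i|_{p_i}=1$, I note that $\mu_x u_i(x)^{p_i}\le 1$ for each $x\in \Omega$, so $u_i(x)\le \mu_x^{-1/p_i}$ is uniformly bounded for $p_i$ close to $1$. Since $C_0(\Omega)$ is finite-dimensional, a subsequence $u_{i_k}$ converges pointwise to some $u\in C_0(\Omega)$ with $u\ge 0$ on $\Omega$; passing to the limit in $\sum_x\mu_x u_{i_k}(x)^{p_{i_k}}=1$ yields $|u|_1=1$, so in particular $u\not\equiv 0$.

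Next, I would identify $u$ as a minimizer of the $1$-Rayleigh quotient. For any Cheeger cut $D\subset\Omega$, both $E_p(\cha_D)$ and $|\cha_D|_p^p$ are independent of $p$ (sums of $0$'s and $1$'s), so $\widetilde{E_p}(\cha_D)$ is constant in $p$ and equals $\widetilde{E_1}(\cha_D)=h(\Omega)$. Testing in \eqref{eq:Rayleigh} therefore gives $\lambda_{1,p}(\Omega)\le h(\Omega)$ for every $p>1$. On the other hand, joint continuity of $|u(y)-u(x)|^p$ in $(u,p)$ together with $u_{i_k}\to u$ and $p_{i_k}\to 1$ forces $E_{p_{i_k}}(u_{i_k})\to E_1(u)$, hence
$$\widetilde{E_1}(u)=\lim_k \widetilde{E_{p_{i_k}}}(u_{i_k})=\lim_k \lambda_{1,p_{i_k}}(\Omega)\le h(\Omega).$$
Since $\widetilde{E_1}(u)\ge \lambda_{1,1}(\Omega)=h(\Omega)$ by \eqref{eq:Rayleigh} at $p=1$, we conclude $\widetilde{E_1}(u)=h(\Omega)$.

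Third, I would decompose $u$ through its level sets. Since $u\ge 0$ on the finite set $\Omega$, let $0=v_0<v_1<\cdots<v_N$ be its distinct values and set $A_n:=\{u\ge v_n\}\subseteq\Omega$, $c_n:=v_n-v_{n-1}>0$. Then $A_N\subsetneq\cdots\subsetneq A_1$ and $u=\sum_{n=1}^N c_n\cha_{A_n}$. A direct computation (using that $|u(y)-u(x)|$ telescopes into a sum over the indices $n$ for which the edge $\{x,y\}$ crosses $\partial A_n$) yields
$$E_1(u)=\sum_{n=1}^N c_n|\partial A_n|_w,\qquad |u|_1=\sum_{n=1}^N c_n|A_n|_\mu.$$
The inequality $|\partial A_n|_w\ge h(\Omega)|A_n|_\mu$ combined with the equality $\widetilde{E_1}(u)=h(\Omega)$ expresses a weighted average of quantities bounded below by $h(\Omega)$ as equal to $h(\Omega)$; since all weights $c_n$ are strictly positive, each term must saturate the bound, i.e., $|\partial A_n|_w=h(\Omega)|A_n|_\mu$ for every $n$, so each $A_n$ is a Cheeger cut.

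The main subtle point is the identification $\widetilde{E_1}(u)=h(\Omega)$, in particular the passage to the limit in $E_{p_{i_k}}(u_{i_k})$ as both the function and the exponent vary. Once this is established, extracting the Cheeger-cut structure is an equality case analysis in a convex combination, greatly simplified by the fact that the limit $u$ lives on a finite set and therefore has only finitely many level sets.
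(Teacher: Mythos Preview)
Your argument is correct and follows essentially the same route as the paper: extract a convergent subsequence by finite-dimensional compactness, identify the limit as a minimizer of $\widetilde{E_1}$ (equivalently, a first eigenfunction of the $1$-Laplacian) using $\lambda_{1,1}(\Omega)=h(\Omega)$, and then run the equality case of the coarea/level-set inequality to see that every superlevel set is a Cheeger cut. The only cosmetic difference is that the paper packages the last step as a separate lemma expressed through the integral $\int_0^\infty |\partial\Omega_\sigma|_w\,d\sigma$, whereas you carry it out directly as a finite telescoping sum $E_1(u)=\sum_n c_n|\partial A_n|_w$ and a weighted-average equality argument; the content is the same.
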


We have the following corollary.
\begin{corollary}\label{coro:1}For a finite connected subset $\Omega$ of $V,$ suppose that the Cheeger cut of $\Omega$ is unique. Then 
$$\lim_{p \to \infty}u_{p}=\frac{1}{|A|_\mu} \cha_A,$$ where $A$ is the Cheeger cut of $\Omega.$ 
\end{corollary}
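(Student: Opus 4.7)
Although the display reads $\lim_{p\to\infty}$, the paper and the immediately preceding Theorem~\ref{thm:main1} concern the regime $p\to 1^+$; moreover for $u=c\cha_A$ the constraint $|u|_p=1$ forces $c=|A|_\mu^{-1/p}$, which tends to $|A|_\mu^{-1}$ precisely when $p\to 1^+$ (it tends to $1$ when $p\to\infty$). Since the stated right-hand side is $\tfrac{1}{|A|_\mu}\cha_A$, I read the corollary as $\lim_{p\to 1^+}u_p=\tfrac{1}{|A|_\mu}\cha_A$ and proceed accordingly.

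The strategy is the standard subsequence argument: \emph{if every subsequence admits a sub-subsequence converging to the same limit $v$, then the full sequence converges to $v$.} Fix any sequence $p_i\to 1^+$ with $p_i>1$. By Theorem~\ref{thm:main1} there is a subsequence $\{p_{i_k}\}$ along which $u_{i_k}\to \sum_{n=1}^N c_n\,\cha_{A_n}$, where each $A_n$ is a Cheeger cut of $\Omega$, $A_N\subsetneq\cdots\subsetneq A_1$, and $c_n>0$. Under the uniqueness hypothesis every $A_n$ equals the unique Cheeger cut $A$, and the strict-inclusion requirement then forces $N=1$ with $A_1=A$. Hence the subsequential limit takes the form $c_1\cha_A$ for some $c_1>0$.

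To identify $c_1$, I pass the normalization $|u_{i_k}|_{p_{i_k}}=1$ to the limit. Since $\Omega$ is finite, $u_{i_k}\to c_1\cha_A$ uniformly on $\Omega$, and from $\sum_{x}\mu_x|u_{i_k}(x)|^{p_{i_k}}=1$ one gets the a priori bound $|u_{i_k}(x)|\le \mu_x^{-1/p_{i_k}}$, so the sequence is uniformly bounded on $\Omega$. The expression $|u|_p^p=\sum_{x\in\Omega}\mu_x|u(x)|^p$ is jointly continuous in $(u,p)$ on bounded sets, so
\[
1=\lim_{k\to\infty}|u_{i_k}|_{p_{i_k}}=|c_1\cha_A|_1=c_1|A|_\mu,
\]
giving $c_1=1/|A|_\mu$. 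Because this limit value is independent of the chosen subsequence, the full sequence $\{u_p\}_{p\to 1^+}$ converges to $\tfrac{1}{|A|_\mu}\cha_A$, as claimed.

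The only technical point is the interchange of the $p\to 1^+$ limit with the $L^p$ norm, which is harmless in the finite-dimensional space $C_0(\Omega)$; no serious obstacle arises, and the corollary is essentially a direct bookkeeping consequence of Theorem~\ref{thm:main1} together with the uniqueness hypothesis ruling out $N\geq 2$.
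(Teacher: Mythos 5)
Your proof is correct and follows essentially the same route as the paper: extract a convergent subsequence via Theorem~\ref{thm:main1}, use the uniqueness of the Cheeger cut to reduce the limit to $c\cha_A$, identify $c=1/|A|_\mu$ from the normalization, and conclude full convergence by the arbitrariness of the subsequence. You also correctly diagnose the typo $\lim_{p\to\infty}$ (the intended limit is $p\to 1^+$, matching the paper's own proof, which itself miswrites ``$p_i\to 0$'').
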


Concerning with these results, we have the following open problems.
\begin{problem}\label{prob:1} In general case, is it true that the normalized first eigenfunction $u_p$ converges, as $p\to 1$?
\end{problem}
\begin{problem}\label{prob:2} What are the limits of the normalized first eigenfunctions in Theorem~\ref{thm:main1}.
\end{problem} 

For Problem~\ref{prob:1}, since there is no uniqueness for the Cheeger cuts, see e.g. Example~\ref{exam:1} in Section~\ref{sec:example}, one needs new ideas to prove the result.
For Problem~\ref{prob:2}, one might hope that the limit of a sequence of normalized first eigenfunctions is a characterization function of a single Cheeger cut, as in Corollary~\ref{coro:1}. By investigating Example~\ref{exam:1} in Section~\ref{sec:example}, we show that this is not true in general. This indicates that the result in Theorem~\ref{thm:main1} cannot be improved to the characterization function of a single Cheeger cut.

The paper is organized as follows: In next section, we prove the main result, Theorem~\ref{thm:main1}. In Section~\ref{sec:example}, we construct an example to show the sharpness of Theorem~\ref{thm:main1}.

\section{Proof of Theorem~\ref{thm:main1}}
Let $(V,E,\mu,w)$ be a weighted graph and $\Omega$ is a connected subset of $V,$ i.e. for any $x,y\in \Omega$ there is a path, $x=x_0\sim x_1\sim \cdots \sim x_k=y$, connecting $x$ and $y$ with $x_i\in \Omega,\forall 1\leq i\leq k-1.$ We need some lemmas.

\begin{lemma} $$\lim_{p\to 1}\lambda_{1,p}(\Omega)=\lambda_{1,1}(\Omega).$$
\end{lemma}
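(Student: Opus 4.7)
The plan is to prove $\lim_{p\to 1}\lambda_{1,p}(\Omega)=\lambda_{1,1}(\Omega)$ by establishing matching upper and lower bounds on $\limsup$ and $\liminf$ as $p\to 1^+$, both based on the Rayleigh quotient characterization \eqref{eq:Rayleigh}, which the paper has already noted is valid uniformly for $p\ge 1$. The key observation that makes everything elementary is that $C_0(\Omega)$ is finite-dimensional and the Rayleigh quotient depends continuously on $p$ pointwise in $u$.

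For the upper bound $\limsup_{p\to 1}\lambda_{1,p}(\Omega)\le \lambda_{1,1}(\Omega)$, I would fix $\varepsilon>0$ and choose a test function $v\in C_0(\Omega)\setminus\{0\}$ with $\widetilde{E_1}(v)\le \lambda_{1,1}(\Omega)+\varepsilon$, using the definition of the infimum. Since $v$ is supported on the finite set $\Omega$, every quantity $|v(y)-v(x)|^p$ and $\mu_x|v(x)|^p$ appearing in $\widetilde{E_p}(v)$ is a continuous function of $p$ at $p=1$, because $a\mapsto a^p$ is continuous in $p$ for each fixed $a\ge 0$. Hence $\widetilde{E_p}(v)\to \widetilde{E_1}(v)$, and combined with the bound $\lambda_{1,p}(\Omega)\le \widetilde{E_p}(v)$ from \eqref{eq:Rayleigh} this gives $\limsup_{p\to 1}\lambda_{1,p}(\Omega)\le \lambda_{1,1}(\Omega)+\varepsilon$. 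Letting $\varepsilon\to 0$ finishes this half.

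For the lower bound I would pick any sequence $p_i\to 1^+$ for which $\lambda_{1,p_i}(\Omega)\to \liminf_{p\to 1}\lambda_{1,p}(\Omega)$, let $u_i$ be the first eigenfunction for $p_i$, and \emph{renormalize} so that $\max_{x\in\Omega}|u_i(x)|=1$. This rescaling is legitimate because the Rayleigh quotient is scale-invariant; it replaces the $p_i$-dependent normalization $|u_i|_{p_i}=1$, which is inconvenient for compactness, with a normalization in a fixed norm. The sequence $\{u_i\}$ then lives in the (compact) unit sphere of $C_0(\Omega)$ in the $\ell^\infty$-norm, so after passing to a subsequence we have $u_{i_k}\to u_*$ pointwise with $\max_{x\in\Omega}|u_*(x)|=1$, hence $u_*\ne 0$. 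Joint continuity of $(a,p)\mapsto a^p$ on bounded subsets of $[0,\infty)\times[1,2]$, combined with $u_{i_k}\to u_*$ and $p_{i_k}\to 1$, yields $\widetilde{E_{p_{i_k}}}(u_{i_k})\to \widetilde{E_1}(u_*)$. Since $\widetilde{E_{p_{i_k}}}(u_{i_k})=\lambda_{1,p_{i_k}}(\Omega)$ and $\widetilde{E_1}(u_*)\ge \lambda_{1,1}(\Omega)$ by the variational definition, we conclude $\liminf_{p\to 1}\lambda_{1,p}(\Omega)\ge \lambda_{1,1}(\Omega)$.

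The only real subtlety is the renormalization step: one must avoid working with the $p$-dependent $L^{p}$-norm in which the eigenfunctions are originally normalized, because the ambient sphere then depends on $p$; passing to the $\ell^\infty$-normalization lets us apply finite-dimensional compactness uniformly in $p$. Beyond that, the argument is a standard $\Gamma$-convergence-style sandwich combining continuity of $p\mapsto \widetilde{E_p}(v)$ for fixed $v$ with a compactness argument for the minimizers, and presents no genuine analytical difficulty; the substantive work of the paper lies in the subsequent analysis of the limit functions in Theorem~\ref{thm:main1}.
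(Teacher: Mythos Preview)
Your proof is correct and rests on the same observation the paper uses---namely that for each fixed $u\in C_0(\Omega)\setminus\{0\}$ one has $\widetilde{E_p}(u)\to\widetilde{E_1}(u)$ as $p\to 1$, together with the Rayleigh characterization~\eqref{eq:Rayleigh}. The paper's proof stops at that one line and leaves implicit why pointwise convergence of the Rayleigh quotients forces convergence of their infima; your explicit $\limsup/\liminf$ sandwich, with the $\ell^\infty$-renormalization and compactness-of-minimizers argument for the lower bound, supplies exactly the detail the paper omits and is an elaboration of the same route rather than a different one.
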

\begin{proof} Note that for any $u\in C_0(\Omega),$ $u\neq 0,$ 
$$\widetilde{E_p}(u)\to \widetilde{E_1}(u),\quad p\to 1.$$ Hence the lemma follows from the Rayleigh quotient characterization \eqref{eq:Rayleigh}.
\end{proof}
\begin{lemma}\label{lem:1.2} For any $p\geq 1,$
$$
\lambda_{1,p}(\Omega)\leq h(\Omega).
$$
\end{lemma}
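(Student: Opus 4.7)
The plan is to establish the bound by substituting a specific test function into the Rayleigh quotient characterisation \eqref{eq:Rayleigh}. Since $\Omega$ is finite there are only finitely many subsets of $\Omega$, so the infimum in the definition of $h(\Omega)$ is attained by some Cheeger cut $D\subset\Omega$. I would take the characteristic function $u:=\cha_D\in C_0(\Omega)$ as a test function.

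The point of this choice is that $\cha_D$ is $\{0,1\}$-valued, so for every pair of neighbours $x\sim y$ the quantity $|\cha_D(y)-\cha_D(x)|^p$ equals $|\cha_D(y)-\cha_D(x)|$ regardless of $p\ge 1$. Hence the sum $E_p(\cha_D)$ collapses to a weighted sum of edges in $\partial D$ in exactly the same way that $|\partial D|_w$ is defined, while $|\cha_D|^p=\sum_{x\in D}\mu_x=|D|_\mu$. Putting the two together,
$$
\widetilde{E_p}(\cha_D)=\frac{E_p(\cha_D)}{|\cha_D|^p}=\frac{|\partial D|_w}{|D|_\mu}=h(\Omega),
$$
and \eqref{eq:Rayleigh} then yields $\lambda_{1,p}(\Omega)\le \widetilde{E_p}(\cha_D)=h(\Omega)$, which is the claim.

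There is no substantive obstacle: the entire argument is a one-line variational computation whose essential ingredient is the elementary identity $|0-1|^p=1$ for every $p\ge 1$. The only point requiring attention is bookkeeping, namely that the summation convention implicit in the definition of $E_p$ matches that implicit in $|\partial D|_w$, so that no spurious factor of $2$ appears in the displayed equality. Because the computation is uniform in $p$, the same test function furnishes the upper bound for the whole range $p\ge 1$ simultaneously, which explains the $p$-independence of the right-hand side of the lemma.
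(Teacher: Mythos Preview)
Your proposal is correct and follows exactly the same approach as the paper: choose a Cheeger cut $D$ (which exists by finiteness of $\Omega$), plug the characteristic function $\cha_D$ into the Rayleigh quotient \eqref{eq:Rayleigh}, and observe that $\widetilde{E_p}(\cha_D)=|\partial D|_w/|D|_\mu=h(\Omega)$. Your added remarks about the identity $|0-1|^p=1$ and the bookkeeping of the factor $2$ only make explicit what the paper leaves implicit.
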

\begin{proof}
As $\Omega$ is a finite subset, we choose $D\subset \Omega$ such that
$
\frac{|\partial D|_w}{|D|_\mu}=h(\Omega).
$
Consider the characteristic function on $D,$ $$\cha_D(x)=\left\{\begin{array}{ll}1,& x\in D,\\
0,& x\in V\setminus D,\end{array}\right.$$
Then by \eqref{eq:Rayleigh},
$$\lambda_{1,p}(\Omega)\leq \widetilde{E_p}(\cha_D)=\frac{|\partial D|_w}{|D|_\mu}=h(\Omega).$$ This proves the lemma.

\end{proof}

\begin{lemma}\label{lem:equivalent}
 
$\lambda_{1,1}(\Omega)= h(\Omega).$
\end{lemma}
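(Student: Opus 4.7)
The plan is to establish the inequality $\lambda_{1,1}(\Omega) \ge h(\Omega)$, since Lemma~\ref{lem:1.2} applied at $p=1$ already gives the opposite bound $\lambda_{1,1}(\Omega) \le h(\Omega)$. By the Rayleigh quotient characterization \eqref{eq:Rayleigh}, it is enough to prove $\widetilde{E_1}(u) \ge h(\Omega)$ for every $u \in C_0(\Omega)\setminus\{0\}$.

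First I would reduce to the case $u \ge 0$. Write $u = u^+ - u^-$. A case analysis on the signs of $a,b\in\R$ yields the identity $|a-b| = |a^+ - b^+| + |a^- - b^-|$, which, summed over edges, gives $E_1(u) = E_1(u^+) + E_1(u^-)$. Since also $|u|_1 = |u^+|_1 + |u^-|_1$, one has
$$\widetilde{E_1}(u) \ge \min\bigl\{\widetilde{E_1}(u^+),\ \widetilde{E_1}(u^-)\bigr\},$$
where a ratio with vanishing denominator is discarded. As $u^{\pm} \in C_0(\Omega)$ are nonnegative, it suffices to prove $\widetilde{E_1}(u) \ge h(\Omega)$ assuming $u \ge 0$.

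The second step is the coarea formula on graphs. For $u \ge 0$, the layer-cake identity $u(x) = \int_0^\infty \cha_{\{u > t\}}(x)\, dt$ gives $|u|_1 = \int_0^\infty |\{u > t\}|_\mu\, dt$. For the energy, an ordered pair $(x,y)$ with $y \sim x$ lies in the directed boundary of $\{u > t\}$ precisely when $u(y) \le t < u(x)$, a set of $t$ of Lebesgue measure $(u(x) - u(y))^+$. Summing $w_{xy}$ over all such pairs and integrating in $t$ yields $E_1(u) = \int_0^\infty |\partial\{u > t\}|_w\, dt$, with the same edge-counting convention used in the paper. Because $u$ vanishes off $\Omega$, every nonempty superlevel set $\{u > t\}$ is a subset of $\Omega$, so by the definition of $h(\Omega)$, $|\partial\{u > t\}|_w \ge h(\Omega)\,|\{u > t\}|_\mu$ for a.e.\ $t \ge 0$. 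Integrating gives $E_1(u) \ge h(\Omega)\,|u|_1$, as required.

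The main technical point is the coarea identity: though conceptually standard, one must match the convention for summing over ordered versus unordered edges in $E_1$ and $|\partial D|_w$, so that the two integrals agree without stray constants. The normalization is already pinned down by the proof of Lemma~\ref{lem:1.2}, which records $\widetilde{E_1}(\cha_D) = |\partial D|_w/|D|_\mu$, so the coarea formula must be stated with the matching convention; once this bookkeeping is settled, the argument reduces to the standard level-set Cheeger inequality.
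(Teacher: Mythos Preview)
Your argument is correct and follows essentially the same route as the paper's proof: invoke Lemma~\ref{lem:1.2} for the upper bound, and for the lower bound apply the coarea formula to the superlevel sets of a nonnegative function together with the definition of $h(\Omega)$. The only cosmetic difference is in the reduction to nonnegative functions---the paper simply passes to $g=|u|$ (using $E_1(u)\ge E_1(|u|)$) rather than decomposing $u=u^+-u^-$.
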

\begin{proof} By Lemma~\ref{lem:1.2}, it suffices to prove that $\lambda_{1,1}(\Omega)\geq h(\Omega).$
 For any $u\in C_0(\Omega)$ with $|u|_1=1,$ let $g=|u|.$ For any $\sigma\geq 0,$ set $\Omega_{\sigma}:=\{x\in \Omega: g(x)>\sigma\}$ and $G(\sigma):=|\partial \Omega_\sigma|_w.$ Then
$$
G(\sigma)=\sum_{e=\{x,y\}\in E,g(x)\leq \sigma< g(y)}w_{xy}.
$$

Hence \begin{eqnarray*}
\int_{0}^{+\infty}G(\sigma) d\sigma&=&\int_{0}^{+\infty}\sum_{e=\{x,y\}\in E,g(y)>g(x)}w_{xy} \cha_{[g(x),g(y))}(\sigma)d\sigma\notag\\
&=&\sum_{e=\{x,y\}\in E,g(y)>g(x)}w_{xy}\int_{0}^{+\infty} \cha_{[g(x),g(y))}(\sigma)d\sigma\notag\\
&=& \sum_{e=\{x,y\}\in E,g(y)>g(x)}w_{xy}|g(x)-g(y)|=E_1(g). \notag\\
\end{eqnarray*}
Moreover, by the definition of $h(\Omega)$, $\Omega_{\sigma}\leq h(\Omega) | \Omega_{\sigma}|_\mu,\ \forall \sigma\geq 0.$ This yields that
\begin{align}\label{main-proof-2}
E_1(u)\geq E_1(g)=&\int_{0}^{+\infty}G(\sigma)d\sigma\geq h(\Omega) \int_{0}^{+\infty}| \Omega_{\sigma}|_\mu\\
=&h(\Omega)|u|_1=h(\Omega).\notag
\end{align} By taking the infimum over $u\in C_0(\Omega)$ with $|u|_1=1$ on the left hand side, we prove the lemma by \eqref{eq:Rayleigh}.

\end{proof}

By the definition, $u$ is called a first eigenfunction of $1$-Laplacian on $\Omega$ if $$\widetilde{E_1}(u)=\inf_{v\in C_0(V), v\neq 0}\widetilde{E_1}(v)=\lambda_{1,1}(\Omega).$$ Since $\widetilde{E_1}(|u|)\leq \widetilde{E_1}(u),$ $|u|$ is also a first eigenfunction of $1$-Laplacian on $\Omega.$

\begin{lemma}\label{lem:structure} Let $u$ be a nonnegative first eigenfunction of $1$-Laplacian on $\Omega.$ Then
for any $\sigma\geq 0,$ $\Omega_\sigma:=\{x\in \Omega: u>\sigma\}$ is a Cheeger cut. Moreover, \begin{equation}\label{eq:structure1}
u=\sum_{n=1}^N c_n\cha_{A_n} .
\end{equation}
where $A_n,$ $1\leq n\leq N$, are Cheeger cuts of $\Omega$ satisfying $A_{N}\subsetneq A_{N-1}\subsetneq \cdots \subsetneq A_1,$ and $c_n>0.$
\end{lemma}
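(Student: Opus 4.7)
The plan is to exploit the equality case in the co-area argument already carried out in the proof of Lemma~\ref{lem:equivalent}. Without loss of generality, normalize so that $|u|_1 = 1$; then by Lemma~\ref{lem:equivalent}, $E_1(u) = \lambda_{1,1}(\Omega) = h(\Omega)$. Replacing $g$ by $u$ in the computation of Lemma~\ref{lem:equivalent}, we have the two identities
$$E_1(u) = \int_0^{+\infty} |\partial \Omega_\sigma|_w\, d\sigma, \qquad |u|_1 = \int_0^{+\infty} |\Omega_\sigma|_\mu\, d\sigma,$$
together with the pointwise inequality $|\partial \Omega_\sigma|_w \geq h(\Omega)\, |\Omega_\sigma|_\mu$ for every $\sigma \geq 0$ for which $\Omega_\sigma \neq \emptyset$, coming directly from the definition of the Cheeger constant.

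Next I would integrate the pointwise inequality and compare with the two identities to obtain
$$h(\Omega) = E_1(u) = \int_0^{+\infty} |\partial \Omega_\sigma|_w\, d\sigma \geq h(\Omega)\int_0^{+\infty}|\Omega_\sigma|_\mu\, d\sigma = h(\Omega)\,|u|_1 = h(\Omega).$$
Hence equality holds throughout, and in particular $|\partial \Omega_\sigma|_w = h(\Omega)\,|\Omega_\sigma|_\mu$ for almost every $\sigma \geq 0$ with $\Omega_\sigma \neq \emptyset$. The point now is that $u$ takes only finitely many values on $\Omega$, so $\sigma \mapsto \Omega_\sigma$ is constant on each of the finitely many intervals cut out by the values of $u$. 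Therefore the a.e. equality upgrades to equality on each such interval, which means $\Omega_\sigma$ is a Cheeger cut whenever it is nonempty. This proves the first claim.

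For the decomposition, list the distinct positive values of $u$ as $0 = t_0 < t_1 < \cdots < t_N$ and define $A_n := \{x\in\Omega : u(x) \geq t_n\}$ for $n=1,\ldots,N$. By construction $A_N \subsetneq A_{N-1} \subsetneq \cdots \subsetneq A_1$, and each $A_n$ coincides with $\Omega_\sigma$ for any $\sigma \in [t_{n-1}, t_n)$, so by the first part each $A_n$ is a Cheeger cut. Setting $c_n := t_n - t_{n-1} > 0$, one checks that for any $x$ with $u(x) = t_k$ the identity $\sum_{n=1}^N c_n \cha_{A_n}(x) = \sum_{n=1}^k c_n = t_k = u(x)$ holds, yielding \eqref{eq:structure1}.

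The only substantive step is promoting the a.e. equality in $\sigma$ to equality for every relevant $\sigma$, and this is handled essentially for free by the finiteness of the range of $u$, which makes the two functions of $\sigma$ both piecewise constant step functions on the same finite partition of $[0,\infty)$. I expect no serious obstacle beyond making this piecewise-constant structure explicit.
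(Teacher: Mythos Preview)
Your proposal is correct and follows essentially the same approach as the paper: both exploit the equality case in the co-area computation from Lemma~\ref{lem:equivalent}, use the finiteness of the range of $u$ to make the level-set family piecewise constant in $\sigma$, and then read off the layer-cake decomposition $u=\sum (t_n-t_{n-1})\cha_{A_n}$. The only cosmetic difference is that the paper writes the integral as a finite sum from the outset, while you first deduce a.e.\ equality and then upgrade via piecewise constancy; these are the same argument in slightly different order.
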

\begin{proof} Let $\{a_i\}_{i=0}^M$ be the range of the function $u,$ such that
$$0=a_0<a_1<\cdots<a_M=\max_{x\in V}u(x).$$ Then for any $\sigma\in [a_i, a_{i+1}),$ $0\leq i\leq M-1,$ $$\Omega_\sigma=\Omega_{a_i}.$$ By the proof of Lemma~\ref{lem:equivalent}, see \eqref{main-proof-2},
\begin{eqnarray*}h(\Omega)|u|_1&=&\lambda_{1,1}(\Omega)|u|_1=E_1(u)=\int_{0}^{+\infty}G(\sigma)d\sigma=\sum_{i=0}^{M-1}(a_{i+1}-a_i)|\partial \Omega_{a_i}|_w\\
&\geq& h(\Omega) \sum_{i=0}^{M-1}(a_{i+1}-a_i)|\Omega_{a_i}|_\mu=h(\Omega)|u|_1.\end{eqnarray*} Hence for any $0\leq i\leq M-1,$ $$|\partial \Omega_{a_i}|_w=h(\Omega)|\Omega_{a_i}|_\mu.$$ So that for any $\sigma\geq 0,$
$\Omega_\sigma$ is a Cheeger cut. 

For the other assertion, note that 
$$u=a_1\cha_{\Omega_{a_0}}+(a_2-a_1)\cha_{\Omega_{a_1}}+\cdots+(a_M-a_{M-1})\cha_{\Omega_{M-1}}.$$ This proves the result.

\end{proof}

Now we are ready to prove Theorem~\ref{thm:main1}.
\begin{proof}[Proof of Theorem~\ref{thm:main1}] Note that $$|u_i|_{p_i}=1,\quad \widetilde{E_1}(u_i)=\lambda_{1,p_i}(\Omega).$$
For any $x\in \Omega,$ $|u_i(x)|\leq |u_i|_{p_i}=1.$ Since $\Omega$ is a finite set, there is a subsequence $\{p_{i_k}\}_k$ of $\{p_i\}_i$ and $u\in C_0(\Omega)$ such that
$$u_{i_k}\to u,\quad \mathrm{pointwise\ on\  \Omega}.$$ Hence
$$u\geq 0,\quad |u|_1=\lim_{k\to \infty}|u_{i_k}|_{p_{i_k}}=1,$$$$\quad E_1(u)=\lim_{k\to \infty}E_1(u_{i_k})=\lim_{k\to \infty}\lambda_{1,p_{i_k}}(\Omega)=\lambda_{1,1}(\Omega).$$
This yields that $u$ is a nonnegative normalized $1$-Laplacian eigenfunction. The theorem follow from Lemma~\ref{lem:structure}.

\end{proof}

This yields the following corollary.
\begin{proof}[Proof of Corollary~\ref{coro:1}] It suffices to prove that for any sequence $p_i\to 0,$ $p_i>1,$ there is a subsequence $p_{i_k}$ such that $$\lim_{k \to \infty}u_{p_{i_k}}=\frac{1}{|A|_\mu} \cha_A,$$ where $A$ is the Cheeger cut of $\Omega.$ By Lemma~\ref{lem:structure}, there is a subsequence $p_{i_k}$ and a nonnegative normalized first eigenfunction of $1$-Laplacian $u$ such that
$$\lim_{k \to \infty}u_{p_{i_k}}=u.$$ Since the Cheeger cut of $\Omega$ is unique, by \eqref{eq:structure1}, $u=c\cha_A.$ By $|u|_1=1,$ $c=\frac{1}{|A|_\mu}.$ This proves the corollary.
\end{proof}

\section{An example}\label{sec:example}
In this section, we give an example to show that the limit of the first eigenfunction of $\Delta_p$ may not be the characteristic function of a single set.

\begin{example}\label{exam:1}Consider the graph $G$ as in Fig.~\ref{Fig:1} such that edge weight $w_{x,y}=1$ for any $x\sim y$ and vertex weight $\mu_{x_1}=\mu_{x_2}=2,\mu_{y_1}=\mu_{y_2}=4.$ Let $\Omega=\{x_1,x_2,y_1,y_2\}.$ By the enumeration, the Cheeger cuts for $h(\Omega)$ are $$\{x_1,x_2\},\{x_1,x_2,y_1\},\{x_1,x_2,y_2\},\{x_1,x_2,y_1,y_2\}.$$ 
\end{example}
\begin{figure}[htbp]
 \begin{center}
   \begin{tikzpicture}
    \node at (0,0){\includegraphics[width=0.84\linewidth]{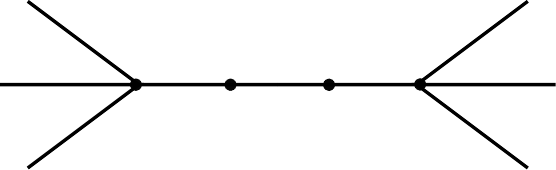}};
    \node at (-3.2,   -.5){\Large $y_1$};
        \node at (-1,   -.5){\Large $x_1$};
                \node at (1.15,   -.5){\Large $x_2$};
    \node at (3.2,   -.5){\Large $y_2$};
   \end{tikzpicture}
  \caption{Example $G$}\label{Fig:1}
 \end{center}
\end{figure} 

We calculate the normalized first eigenfunctions $u_p$ of $p$-Laplacian on $\Omega$ for $p>1.$ One is ready to see that there is a symmetry, $T:\Omega\to \Omega,$ such that $$T(x_1)=x_2,T(x_2)=x_1,T(y_1)=y_2,T(y_2)=y_1,$$ perserving the Dirichlet boundary condition. Since the first eigenfunction is positive and unique up to constant multiplication,  $u_p(x_1)=u_p(x_2),u_p(y_1)=u_p(y_2),$ see \cite{BH}. For convenience, we write $u_p$ as a vector $$u_p=(u_p(x_1),u_p(x_2),u_p(y_1),u_p(y_2)),$$ and by scaling we set
$$v_p=\frac{1}{u_p(x_1)}u_p=(1,1,t_p,t_p).$$ Note that $v_p$ is also a first eigenfunction of  $p$-Laplacian on $\Omega.$

Setting $x=t_p,$ by the eigen-equation, 
\begin{equation}\left\{\begin{array}{ll}-\frac{1}{2}|1-x|^{p-2}(x-1)=\lambda_{1,p},\\
-\frac14(3|x|^{p-2}(-x)+\frac{1}{2}|1-x|^{p-2}(1-x))=\lambda_{1,p} x^{p-1}.
\end{array}\right.
\end{equation} By the first equation, $x<1.$ Plugging the first equation into the second and dividing it by $x^{p-1}$, we get
$$2(1-x)^q+(\frac1x-1)^q-3=0,$$ where $q=p-1.$

Set $$f(x,q)=2(1-x)^q+(\frac1x-1)^q-3,\quad \forall x\in (0,1], q>0.$$ For any $q>0,$ since $f(\cdot, q)$ is monotonely decreasing, $f(1,q)=-3$ and $f(x,q)\to +\infty, x\to 0+$, there is a unique solution to $f(x,q)=0$ denoted by $x_q.$ Note that $x_q=t_p.$

For fixed $x\in (0,1),$ we write $a=1-x,$ $b=\frac1x-1,$ 
and set $$g(q)=f(x,q)=2 a^q+b^q-3.$$
For $q\geq 0,$ $$g'(q)=2 a^q\ln a+b^q\ln b ,\quad g''(q)=2 a^q(\ln a)^2+ b^q(\ln b)^2\geq 0,$$ where the derivatives are understood as right derivatives for $q=0.$
Hence $g(p)$ is convex in $(0,+\infty),$ which yields that for $q\geq 0,$ \begin{equation}\label{eq:exam1}g(q)\geq g(0)+g'(0)q= \ln(a^2 b) q.\end{equation}
One can show that 

\begin{equation*}\left\{\begin{array}{ll}a^2 b>1,& x<\hat{x},\\
a^2 b=1,& x=\hat{x},\\
a^2 b<1,& x>\hat{x},\\
\end{array}\right.
\end{equation*} where $\hat{x}$ is the real solution of $(1-x)^3=x,$ given by $$\hat{x}=1-\sqrt[3]{\frac{\sqrt{93}+9}{18}}+\sqrt[3]{\frac{\sqrt{93}-9}{18}}\approx 0.31767.$$ Hence by \eqref{eq:exam1}, for any $x<\hat{x},$
$$g(q)\geq \ln(a^2 b) q>0,\quad q>0.$$ Hence $x_q\geq \hat{x}.$ 

For any $x>\hat{x},$ by Taylor expension of $g(q)$ at $q=0,$ 
$$g(q)= \ln(a^2 b) q+o(q),\quad q\to 0.$$ Hence by $a^2b<1,$ for sufficiently small $q,$
$$g(q)<0,$$ this yields that $x_q<x.$ This implies that
$\limsup_{q\to 0} x_q\leq x.$ By passing to the limit $x\to \hat{x}+,$ we get $$\limsup_{q\to 0} x_q\leq \hat{x}.$$ Combining it with $x_q\geq \hat{x},$ $$\lim_{q\to 0} x_q=\hat{x}.$$  

This yields that as $p\to 1,$
$$v_p=(1,1, t_p,t_p)\to (1,1, \hat{x},\hat{x}).$$ This yields that
$$u_p=\frac{v_p}{|v_p|_p}\to \frac{1}{4+8\hat{x}}(1,1, \hat{x},\hat{x})\approx (0.15287,0.15287,0.04856,0.04856).$$
Hence, the limit is not a characteristic function on a single set.

\end{document}